\newcommand{\excise}[1]{}
\newcommand{\KK}{\mathbf{K}}
\newcommand{\NN}{\mathbb{N}}
\newcommand{\OO}{\mathcal{O}}
\newcommand{\PP}{\mathcal{P}}
\newcommand{\RR}{\mathbb{R}}
\newcommand{\SR}{\mathrm{SR}}
\newcommand{\U}{\mathrm{U}_{\mathrm{SR}}}
\newcommand{\xx}{\mathbf{x}}
\newcommand{\ZZ}{\mathbb{Z}}
\DeclareMathOperator{\Cl}{Cl}
\DeclareMathOperator{\link}{link}
\DeclareMathOperator{\Neg}{neg}
\DeclareMathOperator{\Spec}{Spec}
\theoremstyle{plain}
\newtheorem{theorem}{Theorem}[section]
\newtheorem{lemma}[theorem]{Lemma}
\newtheorem{corollary}[theorem]{Corollary}
\newtheorem{proposition}[theorem]{Proposition}
\theoremstyle{definition}
\newtheorem{definition}[theorem]{Definition}
\begin{document}%%%%%%%%%%%%%%%%%%%%%%%%%%%%%%%%%%%%%%%%%%%%%%%%%%%%%%
%%%%%%%%%%%%%%%%%%%%%%%%%%%%%%%%%%%%%%%%%%%%%%%%%%%%%%%%%%%%%%%%%%%%%%

\mbox{}
\vspace{-1.1ex}
\title{Cohomology of toric line bundles via simplicial Alexander duality}
\author{Shin-Yao Jow}
\address{Department of Mathematics,
University of Pennsylvania,
Philadelphia, PA 19104}
\email{\texttt{jows@math.upenn.edu}}
%\thanks{}
\date{23 December 2010}

\begin{abstract}
We give a rigorous mathematical proof for the validity of the toric sheaf cohomology algorithm conjectured in the recent paper by R. Blumenhagen, B. Jurke, T. Rahn, and H. Roschy (\texttt{arXiv:1003.5217}). We actually prove not only the original algorithm but also a speed-up version of it. Our proof is independent from (in fact appeared earlier on the arXiv than) the proof by H. Roschy and T. Rahn (\texttt{arXiv:1006.2392}), and has several advantages such as being shorter and cleaner and can also settle the additional conjecture on ``Serre duality for Betti numbers'' which was raised but unresolved in \texttt{arXiv:1006.2392}.
\end{abstract}

\keywords{Toric variety; cohomology of line bundle; local cohomology; Alexander duality.}
\subjclass[2000]{14M25 (Primary); 13D45; 14Q99 (Secondary)}

\maketitle

%%%%%%%%%%%%%%%%%%%%%%%%%%%%%%%%%%%%%%%%%%%%%%%%%%%%%%%%%%%%%%%%%%%%%%
\section{Introduction} 
%%%%%%%%%%%%%%%%%%%%%%%%%%%%%%%%%%%%%%%%%%%%%%%%%%%%%%%%%%%%%%%%%%%%%%

Recently in \cite{BJRR} a new algorithm for computing the cohomology groups of line bundles on a toric variety was conjectured, which the authors observed to be more efficient than previously known algorithms such as those described in \cite[\S 3.5]{Ful}, \cite{EMS}, or \cite[\S 9.1]{CLS}. However apart from having no proof, \cite{BJRR} also gave a somewhat vague treatment of a crucial step in the algorithm involving certain ``remnant cohomology''. The original goal of this article was to provide the first rigorous mathematical proof for the validity of the algorithm, as well as a precise procedure to compute the ``remnant cohomology''. A week after the first version of this article appeared on the arXiv, an independent and alternative proof was given in \cite{RR}, in the last section of which the authors further conjectured a ``Serre duality for Betti numbers'' that will speed up the algorithm even more. It turns out that this additional conjecture also follows quite easily from our original method of proof, possibly due to the fact that our language is more topological, which makes our proof cleaner and shorter than the more algebraic proof in \cite{RR}, so it is easier for us to have a clearer view. Thus in this updated version, the main theorem (Theorem~\ref{t:main}) is strengthened to incorporate this additional conjecture, and its immediate corollary (Corollary~\ref{c:algorithm}) is likewise improved to give the speed-up algorithm.    

The basic framework we will use to explain the algorithm in \cite{BJRR} is the results in \cite[Section~2]{EMS} which connect toric sheaf cohomology with local cohomology over a polynomial ring (the Cox ring). Let $X$ be a $d$-dimensional toric variety over a field $\KK$ associated to a fan $\Delta$ in $N\cong\ZZ^d$. We denote by $\Delta(1)$ the set of 
one-dimensional cones (i.e. rays) in $\Delta$, and assume that $\Delta(1)$ spans $N_{\RR}=N\otimes_{\ZZ}\RR$. The Cox ring of $X$ (\cite{Aud}, \cite{Mus}, \cite{Cox})  is the polynomial ring $S$ in the set of variables indexed by the rays in $\Delta$: \[
          S=\KK[x_{\rho}\mid \rho\in \Delta(1)]. \]
One can consider on $S$ the usual multigrading on a polynomial ring, which in this case is  the grading by $\ZZ^{\Delta(1)}$, the free abelian group with a basis indexed by the rays (of course in $S$ only the $\NN^{\Delta(1)}$-graded pieces are nonzero, but later we will also need to consider localizations of $S$ which invert some of the variables). There is also an important squarefree monomial ideal $B$ of $S$, called the irrelevant ideal, defined as follows. For each cone $\sigma$ in the fan $\Delta$, let $\sigma(1)=\{\rho\in\Delta(1) \mid \rho\subset\sigma\}$. Then \[
 B=\langle\prod_{\rho\notin\sigma(1)}x_\rho \mid \sigma\in\Delta \rangle. \]
The relations between $X$ and $(S,B)$ are very similar to those between a projective space and its homogeneous coordinate ring with the usual irrelevant ideal. For example it was shown in \cite{Cox} that $X$ is a quotient of the complement of the algebraic subset defined by $B$ in the affine space $\Spec S$. Moreover, under this quotient map, the image of the coordinate hyperplane $(x_\rho=0) \subset \Spec S$ is the torus-invariant prime divisor $D_\rho\subset X$ corresponding to the ray $\rho$. In view of this, it is natural to endow $S$ with another coarser grading by the class group of $X$: one simply sets the degree of a monomial $\prod_{\rho\in\Delta(1)}x_\rho^{a_\rho}$ to be the divisor class $[\sum_{\rho\in\Delta(1)} a_\rho D_\rho]$ in $\Cl(X)$. In the special case when $X$ is a projective space, this is simply the grading by the total degree. If $F$ is a 
$\Cl(X)$-graded module over $S$, then an associated quasi-coherent sheaf $\widetilde{F}$ on $X$ can be constructed in a way similar to the projective space case, and it is also true that every coherent sheaf on $X$ is of the form $\widetilde{F}$ for some finitely generated $F$ (\cite[Proposition~3.3]{Cox} for simplicial $X$; \cite[Theorem~2.1]{EMS}). Using \v{C}ech cohomology, it was shown \cite[Proposition~2.3]{EMS} that for $i\ge 1$, there is an isomorphism of $\Cl(X)$-graded $S$-modules \[
 \bigoplus_{\alpha\in\Cl(X)} H^i\left(\widetilde{F(\alpha)}\right) \cong  H^{i+1}_B(F),\]
where $F(\alpha)$ is the shifted module $F(\alpha)_\beta=F_{\alpha+\beta}$, and $H^{i+1}_B(F)$ is the $(i+1)^{\text{th}}$ local cohomology of $F$ with support on $B$ 
\cite[Definition~13.1]{MS}. This isomorphism is the starting point for studying sheaf cohomology on a toric variety both in \cite{EMS} and in this paper. We will focus on the case $F=S$, since $\widetilde{S(\alpha)}\cong \OO_X(D)$ if $D$ is a divisor with divisor class $\alpha$, so $H^{i+1}_B(S)$ already encodes the $i^{\text{th}}$ sheaf cohomology of all line bundles on $X$. Following \cite{EMS}, we use the notation $H^i_\ast(\OO_X)$ to denote the left-hand side of the above isomorphism when $F=S$. So we have 
\begin{equation} \label{e:1}
 H^i_\ast(\OO_X)=\bigoplus_{\alpha\in\Cl(X)} H^i\left(\widetilde{S(\alpha)}\right) \cong  H^{i+1}_B(S).
\end{equation}

The local cohomology $H^{i+1}_B(S)$ can be computed by a \v{C}ech complex whose terms involve only localizations of $S$ inverting some of the variables (\cite[Theorem~13.7]{MS}; see also Section~\ref{s:localcoh} of this paper). Consequently $H^{i+1}_B(S)$, and hence $H^i_\ast(\OO_X)$, enjoy the finer $\ZZ^{\Delta(1)}$-grading. It is known that each of the graded pieces $H^i_\ast(\OO_X)_p$, $p\in \ZZ^{\Delta(1)}$, depends only on which coordinates of $p$ are negative (\cite[Theorem~2.4]{EMS}; see also Proposition~\ref{p:3.1}). More precisely, if one defines \[
   \Neg(p)=\{ \rho\in\Delta(1) \mid \text{the $\rho$ coordinate of $p$ is negative}\}, \]
then $H^i_\ast(\OO_X)_p$ is canonically isomorphic to $H^i_\ast(\OO_X)_q$ for any $p,q\in \ZZ^{\Delta(1)}$ such that $\Neg(p)=\Neg(q)$. Hence it makes sense to define $H^i_\ast(\OO_X)_I$ for a subset $I\subset \Delta(1)$, by setting it to be $H^i_\ast(\OO_X)_p$ for any $p\in \ZZ^{\Delta(1)}$ such that $\Neg(p)=I$. One then wants to know which subsets $I$ give rise to nonzero graded pieces $H^i_\ast(\OO_X)_I$, and how these graded pieces can be computed. Our main theorem provides a simple combinatorial answer to this question when $X$ is a simplicial projective toric variety. To state it, it will be convenient to define two collections of subsets of $\Delta(1)$: first following the notation in \cite{BJRR} we define \[
 \SR=\Big\{\hat{J}\subset\Delta(1)\Bigm| \text{\parbox{9cm}{$\hat{J}$ does not span a cone in $\Delta$, but every proper subset of $\hat{J}$ spans a cone in $\Delta$.}}\Big\}. \]
The squarefree monomials of the form $\prod_{\rho\in\hat{J}} x_\rho$, $\hat{J}\in\SR$ are precisely the minimal generators of the so-called Stanley-Reisner ideal of $\Delta$. Then we define $\U$ to be the collection of all subsets of $\Delta(1)$ which can be expressed as a union $\hat{J_1} \cup \cdots \cup \hat{J_m}$ for some $\hat{J_1},\ldots,\hat{J_m}$ in $\SR$.

\begin{theorem} \label{t:main}
 Let $X$ be a simplicial projective toric variety of dimension $d$ associated to a fan $\Delta$. Let $I$ be a subset of $\Delta(1)$ and let $i\ge 1$ be a positive integer. Then
 \begin{enumerate}[\upshape (a)]
  \item
   $H^i_\ast(\OO_X)_I = 0$ unless $I\in \U$.
  \item
   Let $\hat{I}$ denote the complement of\/ $I$ in $\Delta(1)$. If\/ $i\ne d$, then $H^i_\ast(\OO_X)_I$ is naturally dual to $H^{d-i}_\ast(\OO_X)_{\hat{I}}$ (as $\KK$-vector spaces). Combined with part~(a) this implies in particular that $H^i_\ast(\OO_X)_I = 0$ unless both $I$ and $\hat{I}$ are in $\U$.
  \item
    If $I\in \U$, let $\hat{J_1},\ldots,\hat{J_m}$ be all of the elements in $\SR$ that are contained in $I$. Define $\Lambda_I$ to be the following abstract simplicial complex on the vertex set $\{1,\ldots,m\}$:  \[
    \Lambda_I=\bigg\{K\subset\{1,\ldots,m\} \biggm| \bigcup_{k\in K} \hat{J_k} \ne I \bigg\}. \]
   Then there is a natural isomorphism   \[
     H^i_\ast(\OO_X)_I \cong \widetilde{H}_{|I|-i-2}(\Lambda_I),  \]
   where the right-hand side is the reduced homology of $\Lambda_I$ (with coefficients in $\KK$).
 \end{enumerate}
\end{theorem}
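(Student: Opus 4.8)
The plan is to reduce all three parts to a single topological computation of the $\ZZ^{\Delta(1)}$-graded pieces of the local cohomology module $H^{i+1}_B(S)$ via the isomorphism \eqref{e:1}. Fix $p\in\ZZ^{\Delta(1)}$ with $\Neg(p)=I$ and compute $H^{i+1}_B(S)_p$ from the \v{C}ech complex on the minimal generators of $B$, which are indexed by the maximal cones $\sigma_1,\dots,\sigma_r$ of $\Delta$, the generator for $\sigma_j$ being $m_{\sigma_j}=\prod_{\rho\notin\sigma_j(1)}x_\rho$. For a subset $F\subseteq[r]=\{1,\dots,r\}$, the localization $S_{m_F}$ (inverting the product of the $m_{\sigma_j}$, $j\in F$) inverts exactly the variables $x_\rho$ with $\rho\notin\bigcap_{j\in F}\sigma_j(1)$, so its degree-$p$ piece is one-dimensional when $I\cap\bigcap_{j\in F}\sigma_j(1)=\emptyset$ and zero otherwise. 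Hence the degree-$p$ strand of the \v{C}ech complex is the relative simplicial cochain complex of the pair $\big(2^{[r]},N\big)$, where $N=\bigcup_{\rho\in I}2^{C_\rho}$ and $C_\rho=\{j : \rho\in\sigma_j(1)\}$ (nonempty since projectivity makes $\Delta$ complete). As $2^{[r]}$ is contractible, a single degree shift from the cardinality-versus-dimension convention together with the long exact sequence of the pair gives $H^{i+1}_B(S)_p\cong\widetilde{H}^{\,i-1}(N;\KK)$. The decisive simplification is the nerve lemma: the simplices $2^{C_\rho}$ have nonempty intersection (again a full simplex) precisely when the corresponding rays span a cone, so $N$ is homotopy equivalent to the induced subcomplex $\Delta_I:=\{G\subseteq I : G\text{ spans a cone of }\Delta\}$. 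This yields the master formula
\[
 H^i_\ast(\OO_X)_I\cong\widetilde{H}^{\,i-1}(\Delta_I;\KK),
\]
and part~(a) is immediate: if $I\notin\U$ then some ray $\rho^\ast\in I$ lies in no minimal non-face contained in $I$, and if $G\subseteq I$ spanned a cone while $G\cup\{\rho^\ast\}$ did not, the latter would contain some $\hat{J}\in\SR$ necessarily using $\rho^\ast$, a contradiction; thus $\rho^\ast$ is a cone point of $\Delta_I$, so $\Delta_I$ is contractible and $H^i_\ast(\OO_X)_I=0$.

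For part~(c), assume $I\in\U$ and apply combinatorial Alexander duality on the vertex set $I$: for a complex $\Gamma$ on $n$ vertices, $\widetilde{H}^{\,j}(\Gamma;\KK)\cong\widetilde{H}_{\,n-j-3}(\Gamma^\vee;\KK)$, so with $\Gamma=\Delta_I$ and $j=i-1$,
\[
 H^i_\ast(\OO_X)_I\cong\widetilde{H}_{\,|I|-i-2}\big((\Delta_I)^\vee;\KK\big).
\]
It remains to identify $(\Delta_I)^\vee$ with $\Lambda_I$ up to homotopy. Writing $A_\rho=\{k : \rho\in\hat{J_k}\}$, one checks directly that $\Lambda_I=\bigcup_{\rho\in I}2^{\{1,\dots,m\}\setminus A_\rho}$, once more a union of full simplices; its nerve has $\{\rho_1,\dots,\rho_s\}$ as a face exactly when $\bigcup_t A_{\rho_t}\ne\{1,\dots,m\}$, i.e.\ when some $\hat{J_k}$ is disjoint from $\{\rho_1,\dots,\rho_s\}$, which is precisely the condition that $I\setminus\{\rho_1,\dots,\rho_s\}$ fail to span a cone, that is, the defining condition of $(\Delta_I)^\vee$. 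The nerve lemma therefore gives $\Lambda_I\simeq(\Delta_I)^\vee$ and hence $H^i_\ast(\OO_X)_I\cong\widetilde{H}_{\,|I|-i-2}(\Lambda_I)$, as claimed.

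For part~(b), the extra geometric input is that $X$ is projective, so $\Delta$ is a complete simplicial fan and its underlying complex triangulates the sphere $S^{d-1}$. Inside this sphere the induced subcomplexes $\Delta_I$ and $\Delta_{\hat{I}}$ are complementary: the standard barycentric deformation retraction (push every $I$-coordinate to zero) identifies $S^{d-1}\setminus|\Delta_I|$ with $|\Delta_{\hat{I}}|$. Topological Alexander duality on $S^{d-1}$ then gives $\widetilde{H}_{\,i-1}(\Delta_I)\cong\widetilde{H}^{\,d-i-1}(\Delta_{\hat{I}})$, and combining the master formula, universal coefficients over the field $\KK$ (turning $\widetilde{H}^{\,i-1}$ into the $\KK$-dual of $\widetilde{H}_{\,i-1}$), and the same formula applied to $\hat{I}$ yields
\[
 H^i_\ast(\OO_X)_I\cong\big(\widetilde{H}^{\,d-i-1}(\Delta_{\hat{I}})\big)^\ast\cong\big(H^{d-i}_\ast(\OO_X)_{\hat{I}}\big)^\ast.
\]
The hypothesis $i\ne d$ is exactly what keeps the dual partner $H^{d-i}_\ast$ in the range $i\ge 1$ governed by \eqref{e:1}, the excluded value $d-i=0$ corresponding to global sections, which this local-cohomology framework does not record.

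I expect the main obstacle to be the careful bookkeeping of homological degrees through the three successive shifts—the cardinality-versus-dimension shift in the \v{C}ech complex, the relative-to-reduced shift for the contractible pair $\big(2^{[r]},N\big)$, and the Alexander-duality shift—since an error in any one would misplace the final index $|I|-i-2$; verifying the nerve-lemma hypotheses (contractibility of all the relevant simplex intersections) and the barycentric retraction $S^{d-1}\setminus|\Delta_I|\simeq|\Delta_{\hat{I}}|$ in part~(b), including the degenerate cases $I=\Delta(1)$ and $I=\emptyset$, is the other point demanding genuine care.
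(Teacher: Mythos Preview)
Your argument is correct and its overall architecture matches the paper's: both reduce everything to the master formula $H^i_\ast(\OO_X)_I\cong\widetilde{H}^{\,i-1}(\PP_{\le I})$ (your $\Delta_I$ is the paper's $\PP_{\le I}$), then finish with simplicial Alexander duality plus a nerve theorem. The genuine difference is in how the master formula is reached. The paper computes local cohomology via the cellular resolution supported on the moment polytope (Proposition~\ref{p:3.1}), which lands first at $\widetilde{H}_{d-1-i}(\PP_{\le\hat{I}})$, and then passes to $\widetilde{H}^{i-1}(\PP_{\le I})$ using Lemma~\ref{l:2.3} and topological Alexander duality in $\|\PP\|\cong S^{d-1}$. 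You instead take the ordinary \v{C}ech complex on the minimal generators of $B$, identify its degree-$p$ strand with relative cochains of $(2^{[r]},N)$, and use the nerve lemma $N\simeq\PP_{\le I}$; this avoids the moment polytope entirely and is arguably more elementary. The trade-off is that for part~(b) you must re-invoke Alexander duality in $S^{d-1}$ separately, whereas for the paper part~(b) is a free byproduct of already having both $\widetilde{H}_{d-1-i}(\PP_{\le\hat{I}})$ and $\widetilde{H}^{i-1}(\PP_{\le I})$ in hand. The remaining steps are dual rephrasings of one another: your cone-point argument for~(a) is equivalent to the paper's Lemma~\ref{l:2.4} applied to $\link_{\PP^*}\hat{I}$, and for~(c) you apply the nerve lemma to a cover of $\Lambda_I$ obtaining $(\PP_{\le I})^\vee$, while the paper applies Gr\"unbaum's nerve theorem to the maximal-face cover of $\link_{\PP^*}\hat{I}=(\PP_{\le I})^\vee$ obtaining $\Lambda_I$.
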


We remark that part~(b) is the ``Serre duality for Betti numbers'' conjectured in the last section of \cite{RR}, while part~(c) is essentially the ``remnant'' cohomology $\mathcal{H}^i(\mathcal{Q})$ in \cite{BJRR}.\footnote{In our notations the ``remnant'' cohomology $\mathcal{H}^i(\mathcal{Q})$ is really the (reduced) relative homology of the full simplex on $\{1,\ldots,m\}$ modulo $\Lambda_I$, hence is essentially the same as the reduced homology of $\Lambda_I$ since the full simplex is contractible.}

 As an immediate corollary we obtain the following speed-up version of the algorithm in \cite{BJRR}:

\begin{corollary} \label{c:algorithm}
 Given a line bundle $L$ on $X$ and an integer $0<i<d$,\footnote{The cases $i=0$ or $d$ are known to be easy. When $i=0$ the cohomology can be computed by counting lattice points in a certain polytope: see for example \cite[p.66]{Ful}. The case $i=d$ can be reduced to the case $i=0$ by the usual Serre duality.} we have \[
h^i(X,L)=\sum_{I} \#\bigg\{p\in\ZZ^{\Delta(1)} \biggm| \Neg(p)=I \text{ and } \Big[\sum_{\rho\in\Delta(1)}p_\rho D_\rho\Big]=L\bigg\}\cdot \dim \widetilde{H}_{|I|-i-2}(\Lambda_I), \]
where the sum is over all\/ $I\subset\Delta(1)$ such that both $I$ and $\hat{I}$ are in $\U$. (In the original algorithm in \cite{BJRR} the sum is over all\/ $I$ in $\U$.)
\end{corollary}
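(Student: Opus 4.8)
The plan is to read the corollary off from Theorem~\ref{t:main} by tracking how the two gradings on $H^i_\ast(\OO_X)$ interact. First I would use the isomorphism \eqref{e:1} together with the identification $\widetilde{S(\alpha)}\cong\OO_X(D)$ to express $H^i(X,L)$ as the $\Cl(X)$-graded piece of $H^i_\ast(\OO_X)$ in degree $[L]$. The key observation is that the $\Cl(X)$-grading is the coarsening of the finer $\ZZ^{\Delta(1)}$-grading under the surjection $\ZZ^{\Delta(1)}\twoheadrightarrow\Cl(X)$ sending $e_\rho\mapsto[D_\rho]$, so this $\Cl(X)$-graded piece decomposes as a direct sum of the $\ZZ^{\Delta(1)}$-graded pieces,
\[
H^i(X,L)\;\cong\;\bigoplus_{p}\,H^i_\ast(\OO_X)_p,
\]
where $p$ ranges over all $p\in\ZZ^{\Delta(1)}$ with $\big[\sum_\rho p_\rho D_\rho\big]=[L]$.

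Next I would take $\KK$-dimensions on both sides and regroup the sum according to the value $I=\Neg(p)$. Because $\dim_\KK H^i_\ast(\OO_X)_p$ depends only on $\Neg(p)$ (the fact recalled before Theorem~\ref{t:main}), all $p$ with the same negative-support $I$ contribute the same dimension $\dim_\KK H^i_\ast(\OO_X)_I$; collecting them produces the multiplicity $\#\{p\mid\Neg(p)=I,\ [\sum_\rho p_\rho D_\rho]=[L]\}$ as a coefficient. At this stage I would invoke the three parts of Theorem~\ref{t:main}. Since $0<i<d$ forces $i\ne d$, part~(b) gives $H^i_\ast(\OO_X)_I=0$ unless both $I$ and $\hat I$ lie in $\U$, which lets me discard all other $I$ and restrict the outer sum to exactly the index set named in the statement. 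For each surviving term, part~(c) replaces $\dim_\KK H^i_\ast(\OO_X)_I$ by $\dim_\KK\widetilde{H}_{|I|-i-2}(\Lambda_I)$, yielding the stated formula verbatim.

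The one point that genuinely needs care, and which I expect to be the main obstacle, is the finiteness of the expression. The fibers of $\ZZ^{\Delta(1)}\twoheadrightarrow\Cl(X)$ are cosets of the rank-$d$ lattice $M$, so a given multiplicity $\#\{p\mid\Neg(p)=I,\dots\}$ can a priori be infinite, and the displayed direct sum is taken over an infinite set of $p$. The resolution is that $X$ is projective, so $H^i(X,L)$ is a finite-dimensional $\KK$-vector space; hence only finitely many $p$ contribute a nonzero summand $H^i_\ast(\OO_X)_p$, and $\dim_\KK H^i(X,L)=\sum_p \dim_\KK H^i_\ast(\OO_X)_p$ is a sum with only finitely many nonzero terms. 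Translated through the regrouping, this says precisely that whenever $\widetilde{H}_{|I|-i-2}(\Lambda_I)\ne 0$ the corresponding multiplicity is finite, so every nonzero term of the formula is a finite number times a finite number and the whole sum is finite. I would state this explicitly as the justification that the right-hand side is well-defined, using the convention that a vanishing homology factor annihilates a possibly infinite multiplicity.
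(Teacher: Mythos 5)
Your proposal is correct and is essentially the paper's own (implicit) argument: the paper derives the corollary immediately from Theorem~\ref{t:main} by exactly this decomposition of $H^i(X,L)$ into $\ZZ^{\Delta(1)}$-graded pieces of $H^{i+1}_B(S)$ via \eqref{e:1}, regrouping by $\Neg(p)$, and applying parts~(b) and~(c). Your explicit treatment of the finiteness issue (only finitely many $p$ contribute since $h^i(X,L)<\infty$, with the convention that a vanishing homology factor kills a possibly infinite multiplicity) is a point the paper leaves unstated, and it is handled correctly.
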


We remark that from a theoretic point of view, the efficiency of this algorithm seems to come primarily from the vanishing in Theorem~\ref{t:main}~(a)(b), which greatly reduces the amount of homological computation one needs to perform. We refer the readers to the Introduction in \cite{RR} for more details on implementation of the algorithm and how it compares with other previously known algorithms.

The organization of the rest of this article is as follows: Section~\ref{s:topological} contains some topological preliminaries we will need in our proof, including our primary tool \emph{the simplicial Alexander duality} (Corollary~\ref{c:SAD}) and the topological source of the vanishing in Theorem~\ref{t:main}~(a) (Lemma~\ref{l:2.4}). In Section~\ref{s:localcoh} we review the method for computing the local cohomology $H^i_B(S)$ using the natural cellular resolution of $S/B$ supported on the moment polytope of $X$, as done in \cite[\S 3]{MS04}. After these preparations, we present our rather short proof for Theorem~\ref{t:main} in Section~\ref{s:proof}.

%%%%%%%%%%%%%%%%%%%%%%%%%%%%%%%%%%%%%%%%%%%%%%%%%%%%%%%%%%%%%%%%%%%%%%
\section*{Acknowledgements}
%%%%%%%%%%%%%%%%%%%%%%%%%%%%%%%%%%%%%%%%%%%%%%%%%%%%%%%%%%%%%%%%%%%%%%

The author would like to thank Ezra Miller for several useful comments on the first draft of this article.

%%%%%%%%%%%%%%%%%%%%%%%%%%%%%%%%%%%%%%%%%%%%%%%%%%%%%%%%%%%%%%%%%%%%%%
\section{Combinatorial topological preliminaries} \label{s:topological}
%%%%%%%%%%%%%%%%%%%%%%%%%%%%%%%%%%%%%%%%%%%%%%%%%%%%%%%%%%%%%%%%%%%%%%

In this section we collect some results from simplicial topology which will come into the proof of the main theorem, most notably the simplicial Alexander duality. Throughout this section $\Gamma$ will denote an (abstract) simplicial complex on a finite vertex set $V$, i.e. $\Gamma$ is a collection of subsets of $V$, such that if $\sigma\in\Gamma$ and $\tau\subset\sigma$ then $\tau\in\Gamma$. Each $\sigma\in\Gamma$ is called a simplex or a face of $\Gamma$. The underlying topological space of $\Gamma$ is denoted by $\|\Gamma\|$. Also recall that the link of a face $\sigma$ in $\Gamma$ is the subcomplex \[
  \link_\Gamma \sigma=\{\tau\in \Gamma \mid \tau\cup\sigma \in
  \Gamma\text{ and } \tau\cap\sigma=\varnothing\}. \]
Given any subset $\sigma\subset V$ (not necessarily a face of $\Gamma$), the notation $\hat{\sigma}$ will denote its complement $\hat{\sigma}=V\setminus\sigma$, and the notation $\Gamma_{\le\sigma}$ will denote the simplicial complex consisting of every face of $\Gamma$ that is contained in $\sigma$: \[
 \Gamma_{\le\sigma}=\{\tau\in\Gamma \mid \tau\subset\sigma \}. \] 

\begin{definition}
 The \emph{Alexander dual} of $\Gamma$, denoted by $\Gamma^*$, is the following simplicial complex on the vertex set $V$: \[
 \Gamma^*=\{ \sigma\subset V \mid \hat{\sigma}\notin \Gamma \}. \]
\end{definition}  

\begin{theorem}[Simplicial Alexander duality] \label{t:SAD}
For every integer $j$ there is an isomorphism \[ 
\widetilde{H}_j(\Gamma^*)\cong \widetilde{H}^{|V|-3-j}(\Gamma). \]
\end{theorem}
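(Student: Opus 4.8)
The plan is to derive the duality from the acyclicity of the full simplex on $V$ together with a chain-level identification of the Alexander dual with a (co)chain complex of $\Gamma$. Throughout I work with augmented (reduced) simplicial chains over $\KK$, so that $\widetilde{C}_j(\Gamma)$ has a basis indexed by the $j$-dimensional faces of $\Gamma$ (the faces $\sigma$ with $|\sigma|=j+1$), with the empty face $\varnothing$ occupying degree $-1$. Let $2^V$ denote the full simplex on $V$, i.e.\ the complex consisting of \emph{all} subsets of $V$; since $\|2^V\|$ is contractible, its reduced homology $\widetilde{H}_\ast(2^V)$ vanishes in every degree. Note that $\Gamma^*$ is a subcomplex of $2^V$ (indeed $V\notin\Gamma^*$ always, because $\hat V=\varnothing\in\Gamma$), so the pair $(2^V,\Gamma^*)$ makes sense.

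First I would analyze the relative chain complex of this pair. By definition $\widetilde{C}_j(2^V,\Gamma^*)=\widetilde{C}_j(2^V)/\widetilde{C}_j(\Gamma^*)$ has a basis indexed by the subsets $\sigma\subset V$ with $|\sigma|=j+1$ and $\sigma\notin\Gamma^*$. By the very definition of the Alexander dual, $\sigma\notin\Gamma^*$ is equivalent to $\hat\sigma\in\Gamma$; hence complementation $\sigma\mapsto\hat\sigma$ carries this basis bijectively onto the set of faces $\tau$ of $\Gamma$ with $|\tau|=|V|-j-1$, that is, faces of dimension $|V|-j-2$. These are exactly the basis elements of the reduced cochain group $\widetilde{C}^{\,|V|-j-2}(\Gamma)$.

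The key step is to upgrade this basiswise bijection to an isomorphism of complexes $\widetilde{C}_\bullet(2^V,\Gamma^*)\cong\widetilde{C}^{\,|V|-\bullet-2}(\Gamma)$, intertwining the relative boundary $\partial$ with the simplicial coboundary $\delta$. Dimensionally this is consistent, since $\partial$ lowers $j$ by one while $\delta$ raises $|V|-j-2$ by one. To make the correspondence literally commute one fixes a linear order on $V$ and decorates complementation with the sign of the shuffle permutation interleaving the increasing listings of $\sigma$ and $\hat\sigma$; checking that this signed map is a chain map (so that $\partial$ matches $\pm\delta$ compatibly across degrees) is the one genuinely computational point, and I expect this sign bookkeeping to be the main obstacle. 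Granting it, we obtain $\widetilde{H}_j(2^V,\Gamma^*)\cong\widetilde{H}^{\,|V|-j-2}(\Gamma)$ for all $j$.

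Finally I would feed this into the long exact homology sequence of the pair $(2^V,\Gamma^*)$. Since $\widetilde{H}_\ast(2^V)=0$, the connecting homomorphism is an isomorphism $\widetilde{H}_j(2^V,\Gamma^*)\xrightarrow{\ \sim\ }\widetilde{H}_{j-1}(\Gamma^*)$. Combining the two isomorphisms gives $\widetilde{H}_{j-1}(\Gamma^*)\cong\widetilde{H}^{\,|V|-j-2}(\Gamma)$, and re-indexing $j\rightsquigarrow j+1$ produces exactly $\widetilde{H}_j(\Gamma^*)\cong\widetilde{H}^{\,|V|-3-j}(\Gamma)$. The argument is uniform in $\Gamma$ provided one adopts the standard conventions in degree $-1$ and for the void complex (which is $\Gamma^*$ when $\Gamma=2^V$); these degenerate cases then require no separate treatment.
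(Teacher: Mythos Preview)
Your argument is correct: the identification of $\widetilde{C}_\bullet(2^V,\Gamma^*)$ with $\widetilde{C}^{\,|V|-\bullet-2}(\Gamma)$ via complementation (with shuffle signs), followed by the long exact sequence of the pair $(2^V,\Gamma^*)$ and the acyclicity of the full simplex, is exactly the standard route to simplicial Alexander duality. The one point you flag yourself---the sign check that $\partial$ matches $\pm\delta$ under the signed complementation map---is indeed the only real computation, and it goes through without surprises once one writes down the boundary of $\sigma$ versus the coboundary of $\hat\sigma$ with respect to a fixed linear order on $V$.

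As for comparison with the paper: the paper does not actually supply a proof of this theorem. It states the result and defers to the literature, citing the elementary proof of Bj\"orner--Tancer \cite{BT} and the homological-algebra proof in Miller--Sturmfels \cite[Section~5.1]{MS}. Your argument is essentially the Bj\"orner--Tancer proof recast in the language of the relative chain complex of the pair $(2^V,\Gamma^*)$; the Miller--Sturmfels version phrases the same identification in terms of Koszul (co)chain complexes, which amounts to the same bookkeeping. So there is nothing to contrast: you have reproduced the proof the paper points to.
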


 A self-contained proof from first principles was given in \cite{BT}, and a proof using homological algebra can be found in \cite[Section~5.1]{MS}. The connection to the topological Alexander duality
$\widetilde{H}_j(S^{n-2}\setminus A)\cong \widetilde{H}^{n-3-j}(A)$ is that if $|V|=n$, then the simplicial complex consisting of all proper subsets of $V$ has an underlying space homeomorphic to $S^{n-2}$, and it contains the closed subcomplex $\Gamma$ whose underlying space plays the role of $A$. Furthermore it can be shown that $S^{n-2}\setminus \|\Gamma\|$ is homotopy equivalent to $\|\Gamma^*\|$: see Proposition~2.27 of the book \cite{BP}. What we will use in fact is the following equivalent version given in Proposition~2.29 of that same book:  
 
\begin{corollary}[Simplicial Alexander duality---alternative version] \label{c:SAD}
 If $\sigma\in\Gamma^*$, then \[
  \widetilde{H}_j(\link_{\Gamma^*}\sigma)\cong \widetilde{H}^{|V|-3-j-|\sigma|}(\Gamma_{\le\hat{\sigma}}). \] 
\end{corollary}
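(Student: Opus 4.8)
The plan is to derive Corollary~\ref{c:SAD} directly from Theorem~\ref{t:SAD} by exhibiting $\link_{\Gamma^*}\sigma$ as itself an Alexander dual, but one computed over a smaller vertex set. Write $W=\hat\sigma=V\setminus\sigma$ and regard $\Gamma_{\le\hat\sigma}=\Gamma_{\le W}$ as a simplicial complex on the vertex set $W$. I would then compare $\link_{\Gamma^*}\sigma$ with the Alexander dual of $\Gamma_{\le W}$ formed \emph{relative to $W$}, i.e.\ with the complex $\{\tau\subset W\mid W\setminus\tau\notin\Gamma_{\le W}\}$, in which all complements are now taken inside $W$.

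The heart of the argument is the set-theoretic identity $\link_{\Gamma^*}\sigma=\{\tau\subset W\mid W\setminus\tau\notin\Gamma_{\le W}\}$. To check it I would simply unwind the definition of the link: a face $\tau$ lies in $\link_{\Gamma^*}\sigma$ exactly when $\tau\cap\sigma=\varnothing$ (so that $\tau\subset W$) and $\tau\cup\sigma\in\Gamma^*$, the separate requirement $\tau\in\Gamma^*$ being automatic since $\Gamma^*$ is closed under passing to subsets. By the definition of the Alexander dual, $\tau\cup\sigma\in\Gamma^*$ means $\widehat{\tau\cup\sigma}\notin\Gamma$, and because $\tau\subset W$ one has $\widehat{\tau\cup\sigma}=V\setminus(\tau\cup\sigma)=W\setminus\tau$. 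Finally, since $W\setminus\tau\subset W$, the condition $W\setminus\tau\in\Gamma$ is equivalent to $W\setminus\tau\in\Gamma_{\le W}$, which yields the claimed identity. The hypothesis $\sigma\in\Gamma^*$ enters precisely here to guarantee $W=\hat\sigma\notin\Gamma$, so that $\varnothing$ lies in both sides and the dual is a genuine nonempty complex.

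With this identification in hand I would apply Theorem~\ref{t:SAD} to the complex $\Gamma_{\le W}$ on its own vertex set $W$, which has cardinality $|W|=|V|-|\sigma|$. This gives $\widetilde{H}_j(\{\tau\subset W\mid W\setminus\tau\notin\Gamma_{\le W}\})\cong\widetilde{H}^{|W|-3-j}(\Gamma_{\le W})$, and substituting the identity of the previous paragraph together with $|W|=|V|-|\sigma|$ produces exactly $\widetilde{H}_j(\link_{\Gamma^*}\sigma)\cong\widetilde{H}^{|V|-3-j-|\sigma|}(\Gamma_{\le\hat\sigma})$, as desired.

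The main thing to be careful about—more a matter of bookkeeping than a genuine obstacle—is keeping the complements taken in $V$ distinct from those taken in the smaller set $W$, and confirming that invoking Theorem~\ref{t:SAD} over the vertex set $W$ is legitimate. The latter is unproblematic because the reduced simplicial (co)homology of a complex does not depend on which finite set one regards as its ambient vertex set, so the passage from $V$ to $W$ has no effect on either homology group.
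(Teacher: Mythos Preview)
Your proof is correct and follows exactly the same route as the paper's own proof, which simply asserts that $\link_{\Gamma^*}\sigma$ is the Alexander dual of $\Gamma_{\le\hat\sigma}$ when both are regarded as simplicial complexes on the vertex set $\hat\sigma$, and then invokes Theorem~\ref{t:SAD}. You have just written out in full the verification of that assertion, which the paper leaves to the reader.
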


\begin{proof}
This follows from Theorem~\ref{t:SAD} because $\link_{\Gamma^*}\sigma$ is the Alexander dual of $\Gamma_{\le\hat{\sigma}}$ when both are viewed as simplicial complexes on the vertex set $\hat\sigma$. (On the other hand setting $\sigma=\varnothing$ recovers Theorem~\ref{t:SAD}, so the two statements are actually equivalent.)
\end{proof}

We will also need the following two simple lemmas:

\begin{lemma} \label{l:2.3}
 For any subset $\sigma\subset V$, it holds that $\|\Gamma_{\le\hat{\sigma}}\|$ is a deformation retract of $\|\Gamma\| \setminus \|\Gamma_{\le\sigma}\|$.
\end{lemma}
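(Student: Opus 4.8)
The plan is to realize the deformation retraction explicitly in barycentric coordinates. I represent a point of $\|\Gamma\|$ as a tuple $x=(x_v)_{v\in V}$ of nonnegative reals summing to $1$ whose support $\supp(x)=\{v\in V\mid x_v>0\}$ is a face of $\Gamma$. In these coordinates $\|\Gamma_{\le\sigma}\|$ is exactly the set of points with $\supp(x)\subset\sigma$, so $\|\Gamma\|\setminus\|\Gamma_{\le\sigma}\|$ is the set of points for which $\sum_{v\in\hat\sigma}x_v>0$, while $\|\Gamma_{\le\hat\sigma}\|$ is the set of points with $x_v=0$ for all $v\in\sigma$.

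On $\|\Gamma\|\setminus\|\Gamma_{\le\sigma}\|$ the quantity $w(x):=\sum_{v\in\hat\sigma}x_v$ is continuous and strictly positive, so I can define a candidate retraction $r$ by zeroing out the $\sigma$-coordinates and renormalizing: $r(x)_v=x_v/w(x)$ for $v\in\hat\sigma$ and $r(x)_v=0$ for $v\in\sigma$. Since $\supp(r(x))=\supp(x)\cap\hat\sigma$ is a nonempty subset of the face $\supp(x)$, it is itself a face of $\Gamma$ contained in $\hat\sigma$, so $r$ indeed lands in $\|\Gamma_{\le\hat\sigma}\|$, and continuity of $r$ follows from positivity of $w$. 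I then connect $x$ to $r(x)$ by the straight-line homotopy $H(x,s)=(1-s)x+s\,r(x)$, which is manifestly continuous, satisfies $H(x,0)=x$ and $H(x,1)=r(x)$, and fixes $\|\Gamma_{\le\hat\sigma}\|$ pointwise because $r$ restricts to the identity there (where $w(x)=1$). Thus $H$ is a strong deformation retraction onto $\|\Gamma_{\le\hat\sigma}\|$, provided its image is checked to stay inside $\|\Gamma\|\setminus\|\Gamma_{\le\sigma}\|$.

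The one point that requires genuine care---and which I expect to be the main obstacle---is verifying that the homotopy never leaves $\|\Gamma\|\setminus\|\Gamma_{\le\sigma}\|$ at intermediate times, i.e. that the support never collapses into $\sigma$. The key computation is that for $s\in[0,1)$ one has $H(x,s)_v=(1-s)x_v$, which is positive exactly when $x_v>0$, for $v\in\sigma$; while for $v\in\hat\sigma$ both summands $(1-s)x_v$ and $s\,r(x)_v$ are nonnegative and simultaneously positive precisely when $x_v>0$. Hence $\supp(H(x,s))=\supp(x)$ for all $s<1$. In particular $H(x,s)$ is supported on a face of $\Gamma$, so it lies in $\|\Gamma\|$, and since $\supp(x)\not\subset\sigma$ by hypothesis it does not lie in $\|\Gamma_{\le\sigma}\|$; at the endpoint $s=1$ the point $r(x)$ has nonempty support inside $\hat\sigma$, so it too avoids $\|\Gamma_{\le\sigma}\|$. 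This confirms that $H$ maps into $\|\Gamma\|\setminus\|\Gamma_{\le\sigma}\|$ throughout and completes the argument.
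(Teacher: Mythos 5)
Your proof is correct, and it is in substance the same deformation retraction as the paper's, but packaged differently: the paper decomposes $\|\Gamma\| \setminus \|\Gamma_{\le\sigma}\|$ as $\|\Gamma_{\le\hat{\sigma}}\|$ together with the pieces $\|\tau\|\setminus\|\tau_{\sigma}\|$, one for each face $\tau$ meeting both $\sigma$ and $\hat{\sigma}$, and then asserts that each such piece retracts onto $\|\tau_{\hat{\sigma}}\|$, leaving implicit both the formula for the retraction and the fact that these piecewise retractions agree on overlaps and glue to a continuous map on the whole space. Your global formula $r(x)_v = x_v/w(x)$ on $\hat{\sigma}$, $r(x)_v = 0$ on $\sigma$, with the straight-line homotopy, is exactly the common restriction of those piecewise retractions, so writing it once globally dispenses with the gluing question entirely; the price is the support bookkeeping you carry out (checking $\supp H(x,s) = \supp(x)$ for $s<1$, hence that the homotopy stays in $\|\Gamma\|$ and avoids $\|\Gamma_{\le\sigma}\|$), which you do correctly, including the observation that $r$ fixes $\|\Gamma_{\le\hat{\sigma}}\|$ pointwise so the retraction is in fact strong. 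In short: same geometry (radial projection away from the $\sigma$-coordinates), but your version is self-contained and more rigorous, while the paper's is shorter at the cost of leaving the continuity of the glued retraction to the reader.
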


\begin{proof}
Let $\Gamma_{\sigma,\hat{\sigma}}$ be the set consisting of every face of $\Gamma$ which is neither contained in $\sigma$ nor in $\hat{\sigma}$. Each $\tau\in\Gamma_{\sigma,\hat{\sigma}}$ can be written as a proper disjoint union $\tau=\tau_{\sigma} \sqcup \tau_{\hat{\sigma}}$, where $\tau_{\sigma}=\tau\cap\sigma$ and $\tau_{\hat{\sigma}}=\tau\cap\hat{\sigma}$. Observing that \[
 \|\Gamma\| \setminus \|\Gamma_{\le\sigma}\|=\|\Gamma_{\le\hat{\sigma}}\| \cup
   \bigg( \bigcup_{\tau\in\Gamma_{\sigma,\hat{\sigma}}} \|\tau\|\setminus\|\tau_{\sigma}\|\bigg), \]
the lemma thus follows since each $\|\tau\|\setminus\|\tau_{\sigma}\|$ can be deformation retracted to $\|\tau_{\hat{\sigma}}\|$.
\end{proof}

\begin{lemma} \label{l:2.4}
Given $\sigma\in\Gamma$, let $\tau_1,\ldots,\tau_m\in\Gamma$ be the maximal faces of $\Gamma$ containing $\sigma$. If $\tau_1\cap\cdots\cap\tau_m\supsetneqq\sigma$, then 
$\|\link_{\Gamma}\sigma\|$ is contractible.
\end{lemma}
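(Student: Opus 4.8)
The plan is to prove that $\link_\Gamma\sigma$ is a \emph{cone} and then invoke the standard fact that the geometric realization of a cone deformation retracts onto its apex, which gives contractibility of $\|\link_\Gamma\sigma\|$ at once. The hypothesis $\tau_1\cap\cdots\cap\tau_m\supsetneqq\sigma$ furnishes exactly the apex I need: pick a vertex $v\in\tau_1\cap\cdots\cap\tau_m$ with $v\notin\sigma$, so that $v$ lies in every maximal face of $\Gamma$ containing $\sigma$.

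First I would check that $v$ is genuinely a vertex of $\link_\Gamma\sigma$. Since $\{v\}\cup\sigma\subset\tau_1\in\Gamma$, the subset $\{v\}\cup\sigma$ is a face of $\Gamma$, and as $v\notin\sigma$ we get $\{v\}\in\link_\Gamma\sigma$ directly from the definition of the link. The key step is then to verify that $v$ is a cone point, i.e.\ that $\tau\cup\{v\}\in\link_\Gamma\sigma$ for every $\tau\in\link_\Gamma\sigma$. Given such a $\tau$, by definition $\tau\cup\sigma\in\Gamma$ and $\tau\cap\sigma=\varnothing$. Because $\tau\cup\sigma$ is a face of $\Gamma$ containing $\sigma$, it is contained in some maximal face of $\Gamma$, which must be one of $\tau_1,\ldots,\tau_m$; since $v$ belongs to all of these, $v\in\tau_j$ for that index $j$, whence $\tau\cup\{v\}\cup\sigma\subset\tau_j\in\Gamma$. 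Together with $\tau\cap\sigma=\varnothing$ and $v\notin\sigma$, this shows $\tau\cup\{v\}\in\link_\Gamma\sigma$, establishing the cone structure with apex $v$.

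I do not expect a serious obstacle. The only point requiring any care is the maximality argument, namely that an arbitrary face of $\link_\Gamma\sigma$, once enlarged by $\sigma$, sits inside one of the $\tau_i$; this is precisely where the finiteness of $\Gamma$ and the definition of $\tau_1,\ldots,\tau_m$ as \emph{all} of the maximal faces containing $\sigma$ are used, guaranteeing that the chosen $v$ really does lie in whichever maximal face happens to contain $\tau\cup\sigma$.
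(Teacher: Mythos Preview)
Your argument is correct and is essentially the paper's own proof, just spelled out in more detail: the paper observes that the maximal faces of $\link_\Gamma\sigma$ are exactly $\tau_1\setminus\sigma,\ldots,\tau_m\setminus\sigma$, which by hypothesis share a common vertex, so the link is a cone and hence contractible. Your explicit verification that the chosen $v$ is a cone point is the unpacking of precisely this observation.
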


\begin{proof}
The maximal faces of $\link_{\Gamma}\sigma$ are precisely $\tau_1\setminus\sigma,\ldots,\tau_m\setminus\sigma$, and by assumption they have a nonempty intersection. Hence the lemma follows.
\end{proof}

%%%%%%%%%%%%%%%%%%%%%%%%%%%%%%%%%%%%%%%%%%%%%%%%%%%%%%%%%%%%%%%%%%%%%%
\section{Toric local cohomology} \label{s:localcoh}
%%%%%%%%%%%%%%%%%%%%%%%%%%%%%%%%%%%%%%%%%%%%%%%%%%%%%%%%%%%%%%%%%%%%%%

Let $X$ be a $d$-dimensional simplicial projective toric variety associated to a fan $\Delta$ in $N\cong\ZZ^d$. Let $\Delta(i)$ denote the set of $i$-dimensional cones in $\Delta$. Recall from the Introduction that the Cox ring of $X$ is the polynomial ring $S=\KK[x_{\rho}\mid \rho\in \Delta(1)]$, and the irrelevant ideal is 
$B=\langle\prod_{\rho\notin\sigma(1)}x_\rho \mid \sigma\in\Delta \rangle$. In this section we review the method for computing the local cohomology $H^i_B(S)$ using the natural cellular resolution of $S/B$ supported on the moment polytope of $X$. Our main reference is \cite{MS} (see also \cite[\S 3]{MS04} or \cite[Example~6.6]{Mil}).

A common way to compute local cohomology is to use the usual \v{C}ech complex \cite[Definition~13.5]{MS}. This is often not the most efficient complex one can use. In fact any free resolution $\mathcal{F}_\bullet$ of $S/B$ gives rise to a generalized \v{C}ech complex $\check{\mathcal{C}}_{\mathcal{F}}^\bullet$ \cite[Definition~13.28]{MS}, which can be used to compute $H^i_B(S)$ \cite[Theorem~13.31]{MS}. The usual \v{C}ech complex came from the Taylor resolution \cite[\S 4.3.2]{MS}, which tends not to be minimal. In our present toric situation, the best choice for $\mathcal{F}_\bullet$ is the natural cellular resolution supported on the moment polytope of $X$, which turns out to be minimal \cite[\S 4.3.6]{MS}. We will now briefly describe this resolution. The readers who are unfamiliar with the basic theory of cellular resolutions could consult \cite[\S 4.1]{MS} before going on.

First we need to describe the moment polytope of $X$. In the theory of projective toric varieties this is a well-known polytope in the dual space of $N$, but since we are only concerned about its combinatorial structure, it suffices to know that the set of all of its faces is in an inclusion-reversing bijection with $\Delta$. Hence we can identify the set of $i$-dimensional faces of the moment polytope with the set $\Delta(d-i)$, and label each face $\sigma$ with the monomial $\xx^{\hat{\sigma}}\overset{\text{def}}{=}\prod_{\rho\notin\sigma(1)}x_\rho$. The moment polytope together with this labeling fits the definition of a labeled cell complex \cite[Definition~4.2]{MS}, and   gives rise to the following cellular free resolution of $S/B$: \[
\mathcal{F}_\bullet \colon \ 0\longleftarrow S \longleftarrow \bigoplus_{\sigma\in\Delta(d)} S\,\xx^{\hat{\sigma}} \longleftarrow \bigoplus_{\sigma\in\Delta(d-1)} S\,\xx^{\hat{\sigma}} \longleftarrow \cdots \longleftarrow \bigoplus_{\sigma\in\Delta(0)} S\,\xx^{\hat{\sigma}} \longleftarrow 0, \]
where the homomorphism from a direct summand $S\,\xx^{\hat{\sigma}}$ of $\mathcal{F}_i$ to a direct summand $S\,\xx^{\hat{\tau}}$ of $\mathcal{F}_{i-1}$ is $0$ if $\tau\not\supset\sigma$, and is the inclusion map times $\pm 1$ if $\tau\supset\sigma$. The sign is determined by arbitrary but fixed orientations for the cones in $\Delta$. 
The corresponding generalized \v{C}ech complex $\check{\mathcal{C}}_{\mathcal{F}}^\bullet$ is by definition \[
\check{\mathcal{C}}_{\mathcal{F}}^\bullet \colon \ 0\longrightarrow S \longrightarrow \bigoplus_{\sigma\in\Delta(d)} S\Big[\frac{1}{\xx^{\hat{\sigma}}}\Big] \longrightarrow \bigoplus_{\sigma\in\Delta(d-1)} S\Big[\frac{1}{\xx^{\hat{\sigma}}}\Big] \longrightarrow \cdots \longrightarrow   \bigoplus_{\sigma\in\Delta(0)} S\Big[\frac{1}{\xx^{\hat{\sigma}}}\Big] \longrightarrow 0, \]
where the homomorphism between two direct summands $S[\frac{1}{\xx^{\hat{\tau}}}]$ and $S[\frac{1}{\xx^{\hat{\sigma}}}]$ for each codimension one $\sigma\subset\tau$ is again the inclusion map times the sign determined by orientations.

By \cite[Theorem~13.31]{MS} we have $H^i_B(S)=H^i(\check{\mathcal{C}}_{\mathcal{F}}^\bullet)$. Since $\check{\mathcal{C}}_{\mathcal{F}}^\bullet$ is $\ZZ^{\Delta(1)}$-graded, so is $H^i_B(S)$. In fact given any $p\in\ZZ^{\Delta(1)}$ we can describe the graded piece $H^i_B(S)_p$ explicitly as follows. Let $I\subset\Delta(1)$ be the set of places where $p$ has negative coordinates, i.e. \[
   I=\Neg(p)\overset{\text{def}}{=}\{ \rho\in\Delta(1) \mid \text{the $\rho$ coordinate of $p$ is negative}\}. \]
Let $\hat{I}=\Delta(1)\setminus I$. Then the graded piece $S[\frac{1}{\xx^{\hat{\sigma}}}]_p$ is $0$ if $\sigma(1)\not\subset\hat{I}$, and is $\KK\xx^p$ if $\sigma(1)\subset\hat{I}$, where $\xx^p$ denotes the monomial whose exponent vector is $p$. Taking the graded piece of degree $p$ of every term in $\check{\mathcal{C}}_{\mathcal{F}}^\bullet$ and then computing the cohomology, one obtains the following description of $H^i_B(S)_p$, which is essentially the same as \cite[Theorem~2.7]{EMS} or \cite[Proposition~3.2]{MS04}:

\begin{proposition} \label{p:3.1}
Let $\PP$ be the abstract simplicial complex on the vertex set $\Delta(1)$ given by  $\PP=\{\sigma(1) \mid \sigma\in\Delta\}$. Then for every $p\in\ZZ^{\Delta(1)}$ and $i\ge 2$ there is an isomorphism \[
 H^i_B(S)_p\cong \widetilde{H}_{d-i}(\PP_{\le\hat{I}}),  \]
where $I=\Neg(p)$. In particular $H^i_B(S)_p$ only depends on $\Neg(p)$.
\end{proposition}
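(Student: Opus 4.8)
The plan is to compute $H^i_B(S)=H^i(\check{\mathcal{C}}_{\mathcal{F}}^\bullet)$ one $\ZZ^{\Delta(1)}$-degree at a time, by passing to the degree-$p$ strand $(\check{\mathcal{C}}_{\mathcal{F}}^\bullet)_p$ and identifying it with a simplicial chain complex. Fix $p$ and set $I=\Neg(p)$, $\hat{I}=\Delta(1)\setminus I$. Using the graded-piece description recorded above, the summand $S[\frac{1}{\xx^{\hat{\sigma}}}]_p$ contributes a copy of $\KK\xx^p$ precisely when $\sigma(1)\subseteq\hat{I}$, and $0$ otherwise. Because $X$ is simplicial, a cone $\sigma\in\Delta(k)$ satisfies $|\sigma(1)|=k$, so $\sigma(1)\subseteq\hat{I}$ is exactly the condition that $\sigma(1)$ be a face of the simplicial complex $\PP_{\le\hat{I}}$. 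Hence, for $i\ge 1$, the term of $(\check{\mathcal{C}}_{\mathcal{F}}^\bullet)_p$ in cohomological position $i$ has a $\KK$-basis indexed by the faces of $\PP_{\le\hat{I}}$ of dimension $d-i$ (i.e.\ cardinality $d-i+1$, arising from the cones in $\Delta(d-i+1)$).

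Next I would recognize this strand as the reduced simplicial chain complex of $\PP_{\le\hat{I}}$ under the reindexing that matches position $i$ with simplicial dimension $d-i$. The \v{C}ech differential sends the summand attached to a cone to the summands attached to its codimension-one subcones, which in $\PP_{\le\hat{I}}$ is a face mapping to its facets; on each degree-$p$ piece this is the identity $\KK\xx^p\to\KK\xx^p$ scaled by the orientation sign, so after reindexing it is a simplicial boundary operator. Positions $1,\dots,d+1$ then assemble into the augmented (reduced) chain complex, the bottom term $\bigoplus_{\sigma\in\Delta(0)}$, coming from the zero cone with empty ray set, supplying the augmentation $\widetilde{C}_{-1}=\KK$ attached to the empty face $\varnothing$. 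The one point requiring care is that the signs ``determined by the fixed orientations of the cones'' really do assemble into a bona fide simplicial boundary operator, so that genuine reduced homology (and not some twisted version) results; I expect this sign-bookkeeping to be the main, though routine, obstacle. It is handled by the standard theory of cellular resolutions: the labeled moment polytope is a regular cell complex, its differential is given by cellular incidence numbers, and under the labeling $\sigma\mapsto\xx^{\hat{\sigma}}$ and the inclusion-reversing bijection with $\Delta$ the degree-$p$ strand becomes the oriented simplicial chain complex of $\PP_{\le\hat{I}}$, whose homology is independent of the orientation choices (changing an orientation only rescales a basis vector by $-1$).

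Finally I would read off the cohomology. For $i\ge 2$ the cohomology of $(\check{\mathcal{C}}_{\mathcal{F}}^\bullet)_p$ in position $i$ is computed from positions $i-1,i,i+1$, all of which are $\ge 1$ and hence lie in the chain-complex range just described; therefore
\[
  H^i_B(S)_p=H^i\big((\check{\mathcal{C}}_{\mathcal{F}}^\bullet)_p\big)\cong\widetilde{H}_{d-i}(\PP_{\le\hat{I}}).
\]
The hypothesis $i\ge 2$ is precisely what keeps the extra term in position $0$, namely $S_p$ (which has no counterpart in the reduced chain complex), out of the computation; at $i=1$ the image of $S_p$ would in general change the answer. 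The concluding assertion that $H^i_B(S)_p$ depends only on $\Neg(p)$ is then immediate, since the right-hand side depends only on $\hat{I}=\Delta(1)\setminus\Neg(p)$.
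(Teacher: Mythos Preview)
Your proposal is correct and follows exactly the approach the paper intends: the paragraph preceding Proposition~\ref{p:3.1} already records that $S[\frac{1}{\xx^{\hat{\sigma}}}]_p$ is $\KK\xx^p$ or $0$ according as $\sigma(1)\subseteq\hat I$ or not, and then simply says ``taking the graded piece of degree $p$ of every term in $\check{\mathcal{C}}_{\mathcal{F}}^\bullet$ and then computing the cohomology'' yields the statement. You have merely spelled out the implicit indexing identification (cohomological degree $i$ with simplicial dimension $d-i$, the zero cone supplying the augmentation, and why $i\ge 2$ is needed to keep $S_p$ out of the picture); this is exactly the content the paper leaves to the reader, and your account of it is accurate.
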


%%%%%%%%%%%%%%%%%%%%%%%%%%%%%%%%%%%%%%%%%%%%%%%%%%%%%%%%%%%%%%%%%%%%%%
\section{Proof of Theorem} \label{s:proof}
%%%%%%%%%%%%%%%%%%%%%%%%%%%%%%%%%%%%%%%%%%%%%%%%%%%%%%%%%%%%%%%%%%%%%%

We now present the proof of our main theorem.

\begin{proof}[Proof of Theorem~\ref{t:main}]
By \eqref{e:1} we have $H^i_\ast(\OO_X)_I \cong  H^{i+1}_B(S)_I$, and by Proposition~\ref{p:3.1} we have $H^{i+1}_B(S)_I\cong \widetilde{H}_{d-1-i}(\PP_{\le\hat{I}})$. Lemma~\ref{l:2.3} then implies that \[ 
\widetilde{H}_{d-1-i}(\PP_{\le\hat{I}})\cong \widetilde{H}_{d-1-i}(\|\PP\|\setminus\|\PP_{\le I}\|). \]
Since $\|\PP\|$ is homeomorphic to $S^{d-1}$, the topological Alexander duality gives \[ \widetilde{H}_{d-1-i}(\|\PP\|\setminus\|\PP_{\le I}\|)\cong \widetilde{H}^{i-1}(\PP_{\le I}). \]
Now there are two possible cases for $I$:
\begin{description}
\item[Case 1] $\hat{I}\notin\PP^*$. This is equivalent to $I\in\PP$, which implies that $\PP_{\le I}$ is the full simplex on $I$, so $\widetilde{H}^{i-1}(\PP_{\le I})=0$.
\item[Case 2] $\hat{I}\in\PP^*$. In this case we can apply Corollary~\ref{c:SAD} to get \[
  \widetilde{H}^{i-1}(\PP_{\le I})\cong \widetilde{H}_{|I|-2-i}(\link_{\PP^*}\hat{I}). \]
 Let $J_1,\ldots,J_m$ be the maximal faces of $\PP^*$ containing $\hat{I}$. There are two further subcases: either $J_1\cap\cdots\cap J_m\supsetneqq\hat{I}$ or $J_1\cap\cdots\cap J_m=\hat{I}$. If $J_1\cap\cdots\cap J_m\supsetneqq\hat{I}$ then $\widetilde{H}_{|I|-2-i}(\link_{\PP^*}\hat{I})=0$ by Lemma~\ref{l:2.4}. Hence $H^i_\ast(\OO_X)_I=0$ unless $J_1\cap\cdots\cap J_m=\hat{I}$, or equivalently $I=\hat{J_1}\cup\cdots\cup \hat{J_m}$. This proves part~(a) since \[
 \SR=\{\hat{J}\mid \text{$J$ is a maximal face of $\PP^*$}\}. \]  
Now if $J_1\cap\cdots\cap J_m=\hat{I}$, then $J_1\setminus \hat{I}, \ldots,J_m\setminus \hat{I}$ are precisely the maximal faces of $\link_{\PP^*}\hat{I}$, and the nerve they form (in the sense of \cite{Gru}) is precisely $\Lambda_I$. Hence \[
 \widetilde{H}_{|I|-2-i}(\link_{\PP^*}\hat{I})\cong \widetilde{H}_{|I|-2-i}(\Lambda_I) \]
 by \cite[Theorem~10]{Gru}, which proves part~(c).
\end{description}
Finally to see part~(b), note that the series of isomorphisms in the beginning part of the proof showed that \[
 H^i_\ast(\OO_X)_I \cong \widetilde{H}_{d-1-i}(\PP_{\le\hat{I}}) \cong \widetilde{H}^{i-1}(\PP_{\le I}). \]
So $H^i_\ast(\OO_X)_I\cong \widetilde{H}^{i-1}(\PP_{\le I})$, and 
$H^{d-i}_\ast(\OO_X)_{\hat{I}}\cong \widetilde{H}_{i-1}(\PP_{\le \hat{\hat{I}}})=\widetilde{H}_{i-1}(\PP_{\le I})$ if $i\ne d$, hence they are dual to each other.
\end{proof}

%%%%%%%%%%%%%%%%%%%%%%%%%%%%%%%%%%%%%%%%%%%%%%%%%%%%%%%%%%%%%%%%%%%%%%
%%%%%%%%%%%%%%%%%%%%%%%%%%%%%%%%%%%%%%%%%%%%%%%%%
%%%%%%%%%%%%%%%%%%%%%%%%%%%%%%%%%%%%%%%%%%%%%%%%%%%%%%%%%%%%%%%%%%%%%%

%%%%%%%%%%%%%%%%%%%%%%%%%%%%%%%%%%%%%%%%%%%%%%%%%%%%%%%%%%%%%%%%%%%%%%
%%%%%%%%%%%%%%%%%%%%%%%%%%%%%%%%%%%%%%%%%%%%%%%%%%%%%%%%%%%%%%%%%%%%%%
\end{document}